\renewcommand{\epsilon}{\varepsilon}
\newtheorem{theorem}{Theorem}[section]
\newtheorem{proposition}[theorem]{Proposition}
\newtheorem{corollary}[theorem]{Corollary}
\newtheorem{lemma}[theorem]{Lemma}
\theoremstyle{definition}
\theoremstyle{remark}
\newcommand{\CC}{\mathfrak C}
\newcommand{\Z}{\mathbb Z}
\newcommand{\C}{\mathbb C}
\newcommand{\cohom}[3]{H^{{\raise1pt\hbox{$\scriptstyle#1$}}}(#2\>\!,#3)}
\newcommand{\tatecohom}[3]%
  {\widehat H^{{\raise1pt\hbox{$\scriptstyle#1$}}}(#2\>\!,#3)}
\newcommand{\Cohom}[3]%
  {H^{{\raise1pt\hbox{$\scriptstyle#1$}}}\big(#2\>\!,#3\big)}
\newcommand{\Tatecohom}[3]%
  {\widehat H^{{\raise1pt\hbox{$\scriptstyle#1$}}}\big(#2\>\!,#3\big)}
\newcommand{\homol}[3]{H_{{\lower1pt\hbox{$\scriptstyle#1$}}}(#2\>\!,#3)}
\newcommand{\homolog}[2]{H_{{\lower1pt\hbox{$\scriptstyle#1$}}}(#2)}
\title{The first $\ell^2$-Betti number and groups acting on trees}
\author{Indira Chatterji}
\author{Sam Hughes}
\author{Peter Kropholler}
\address{Universit\'e de Nice, Parc Valrose, 06000 Nice, FRANCE}
\address{School of Mathematical Sciences, University of Southampton, Southampton, SO17 1BJ, United Kingdom}
\email{indira.chatterji@math.cnrs.fr}
\email{sam.hughes@soton.ac.uk}
\email{p.h.kropholler@soton.ac.uk}
\date{\today} 
\begin{document}
\maketitle

\begin{abstract}
We generalise results of Thomas, Allcock, Thom--Petersen, and Kar--Niblo to the first $\ell^2$-Betti number of quotients of certain groups acting on trees by subgroups with free actions on the edge sets of the graphs.
\end{abstract}
\section{Introduction}
The $\ell^2$-Betti numbers $b^{(2)}_i(G)$ of a group $G$ are defined in \cite{Lueck2002}.
The $\ell^2$-Euler characteristic $\chi^{(2)}$ of $G$ is the alternating sum of these Betti numbers and is denoted $\chi^{(2)}{(G)}$.
Let $\CC$ denote the class of groups $F$ such that 
\begin{itemize}
\item $b^{(2)}_1(F)=b^{(2)}_2(F)=0$, and 
\item either $\chi^{(2)}(F)=0$ or $F$ is finite.
\end{itemize}
Note that that $\CC$ contains all $\ell^2$-acyclic groups (i.e. the groups for which $b_i^{(2)}=0$ for all $i>0$) and in particular it contains all amenable groups.   Relevant background on $\ell^2$-cohomology is included in Section~\ref{sec.background}. In this note we prove the following theorem.

\begin{theorem}\label{thm.main} Let $F$ be a group acting cocompactly on a tree, with vertex and edge stabilisers in $\CC$, let $N$ be a subgroup normally generated by $m$ elements, intersecting the vertex stabilisers trivially. Let $G$ denote $F/N$ and set $k:=\chi^{(2)}(F)+m$.
Then the following conclusions hold:
\begin{enumerate}
    \item If $k \leq 0$, then $G$ is infinite. \label{thm.main.1}
    \item If $k < 0$, then $ b^{(2)}_1(G)\geq -k >0$.   \label{thm.main.2}
    \item If $G$ is finite, then $k>0$ 
and $|G|\geq \frac{1}{k}$. \label{thm.main.4}
\end{enumerate}
\end{theorem}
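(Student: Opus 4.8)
My plan is to reduce all three parts to the single inequality
\begin{equation}
b^{(2)}_1(G)\ \geq\ b^{(2)}_0(G)-k, \tag{$\ast$}
\end{equation}
with the convention $b^{(2)}_0(G)=1/|G|$ if $G$ is finite and $b^{(2)}_0(G)=0$ otherwise. Granting $(\ast)$, each conclusion is immediate: if $G$ is finite then $b^{(2)}_1(G)=0$, so $(\ast)$ reads $0\geq 1/|G|-k$, i.e.\ $k\geq 1/|G|>0$, which is part~\eqref{thm.main.4}; part~\eqref{thm.main.1} is the contrapositive ``$k\leq 0\Rightarrow G$ infinite'' of the inequality $k>0$ just obtained; and for part~\eqref{thm.main.2}, if $k<0$ then $G$ is infinite by part~\eqref{thm.main.1}, so $b^{(2)}_0(G)=0$ and $(\ast)$ gives $b^{(2)}_1(G)\geq -k>0$. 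So the whole proof rests on establishing $(\ast)$.

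\textbf{Step 1: an equivariant chain complex.} Since $N$ meets every vertex stabiliser trivially, it meets every edge stabiliser trivially, so $N$ acts freely on the tree $T$; hence $\bar T:=N\backslash T$ is a connected graph with $\pi_1(\bar T)\cong N$ carrying an action of $G=F/N$ whose cell stabilisers are copies of the stabilisers $F_v$, $F_e$ of the original action, and so lie in $\CC$. I would then use the cellular chain complex of $\bar T$, a complex of $\Z G$-modules $0\to C_1\xrightarrow{\partial}C_0\to 0$ with $C_0=\bigoplus_v\Z[G/F_v]$ and $C_1=\bigoplus_e\Z[G/F_e]$ (finite sums, by cocompactness), $H_0\cong\Z$, and $H_1=\ker\partial\cong N^{\mathrm{ab}}$ by Hurewicz, the latter carried as a $\Z G$-module via conjugation. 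This yields the four-term exact sequence $0\to N^{\mathrm{ab}}\to C_1\to C_0\to\Z\to 0$. Because $N$ is normally generated in $F$ by $m$ elements, their images generate $N^{\mathrm{ab}}$ as a $\Z G$-module, so $N^{\mathrm{ab}}$ is $m$-generated.

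\textbf{Step 2: the $\ell^2$-dimension count.} For a $\Z G$-module $M$ set $\beta_i(M)=\dim_{\mathcal N(G)}\operatorname{Tor}^{\Z G}_i(M,\mathcal N(G))$, where $\mathcal N(G)$ is the group von Neumann algebra, so that $\beta_i(\Z)=b^{(2)}_i(G)$ and, by Shapiro together with the induction formula for von Neumann dimension, $\beta_i(\Z[G/H])=b^{(2)}_i(H)$; for $H\in\CC$ this gives $\beta_1(\Z[G/H])=0$ and $\beta_0(\Z[G/H])=\chi^{(2)}(H)$ (each side equal to $1/|H|$ or $0$). Splitting the four-term sequence at $K:=\operatorname{im}\partial$ into two short exact sequences and running the two resulting $\operatorname{Tor}$-long exact sequences, using $\beta_1(C_0)=\beta_1(C_1)=0$ and additivity of $\dim_{\mathcal N(G)}$ along exact sequences, I expect to arrive at the identity
\begin{equation*}
b^{(2)}_1(G)\ =\ \beta_1(K)\ -\ \beta_0(N^{\mathrm{ab}})\ +\ \beta_0(C_1)\ -\ \beta_0(C_0)\ +\ b^{(2)}_0(G).
\end{equation*}
Here $\beta_0(C_1)-\beta_0(C_0)=\sum_e\chi^{(2)}(F_e)-\sum_v\chi^{(2)}(F_v)=-\chi^{(2)}(F)$ by additivity of the $\ell^2$-Euler characteristic over the graph-of-groups decomposition of $F$; moreover $\beta_1(K)\geq 0$, while $\beta_0(N^{\mathrm{ab}})\leq m$ since $N^{\mathrm{ab}}$ is $m$-generated. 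Substituting gives $b^{(2)}_1(G)\geq -m-\chi^{(2)}(F)+b^{(2)}_0(G)=b^{(2)}_0(G)-k$, which is $(\ast)$.

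\textbf{Expected difficulties.} The conceptual skeleton is short; the real work is the dimension bookkeeping of Step~2, which leans on two standard but substantial inputs from $\ell^2$-theory: the induction formula $\beta_i(\Z[G/H])=b^{(2)}_i(H)$ (ultimately dimension-flatness of $\mathcal N(G)$ over $\mathcal N(H)$) and additivity of $\dim_{\mathcal N(G)}$ on short exact sequences. Tracking the degree-$1$ terms carefully is the fiddly part, and it is precisely here that $F_v,F_e\in\CC$ is used: their vanishing first $\ell^2$-Betti numbers make $\operatorname{Tor}_1(C_0,\mathcal N(G))$ and $\operatorname{Tor}_1(C_1,\mathcal N(G))$ invisible to $\dim_{\mathcal N(G)}$. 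A second point needing care is the additivity of $\chi^{(2)}$ over the graph of groups when $F$ is not assumed of type $\FP$: this is exactly where membership in $\CC$ is invoked, guaranteeing each $\chi^{(2)}(F_v),\chi^{(2)}(F_e)$ is defined and that the Mayer--Vietoris argument for $\ell^2$-Euler characteristics applies. Identifying the $\Z G$-module structure on $H_1(\bar T;\Z)=N^{\mathrm{ab}}$ together with its $m$-generation, and the passage from $(\ast)$ to the three stated conclusions, are routine.
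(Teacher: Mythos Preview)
Your argument is correct and arrives at the same key inequality $b^{(2)}_1(G)\geq b^{(2)}_0(G)-k$ as the paper, but the route is genuinely different. The paper proceeds topologically: it attaches $m$ free $G$-orbits of $2$-cells to $\overline T$ along loops representing the normal generators, producing a simply connected $G$-complex $Y$; it then computes $\chi^{(2)}(Y)$ cell-wise (getting $\chi^{(2)}(F)+m$ via the Chatterji--Mislin formula) and Betti-wise, invokes a Borel-construction comparison lemma to replace $b^{(2)}_p(Y)$ by $b^{(2)}_p(EG\times Y)$, and finally compares with $EG$ via the diagonal map. You instead stay at the chain level: you split the four-term sequence $0\to N^{\mathrm{ab}}\to C_1\to C_0\to\Z\to 0$, run the two $\operatorname{Tor}^{\Z G}_\bullet(\mathcal N G,-)$ long exact sequences, use Shapiro plus the induction formula to identify $\beta_i(\Z[G/H])$ with $b^{(2)}_i(H)$, and replace the paper's ``$m$ free $2$-cells'' by the purely algebraic bound $\beta_0(N^{\mathrm{ab}})\leq m$.

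What each approach buys: the paper's argument is geometrically transparent (one literally sees the $2$-complex), but it needs the auxiliary Borel-comparison lemma and, as written, invokes the hypothesis $b^{(2)}_2(F_v)=b^{(2)}_2(F_e)=0$ to apply that lemma in degree $2$. Your $\operatorname{Tor}$-bookkeeping avoids the Borel construction entirely and, as you correctly observe, only uses the vanishing of $b^{(2)}_1$ for the stabilisers together with the $\chi^{(2)}$ condition from $\CC$; the $b^{(2)}_2$ hypothesis never enters. One small point worth making explicit when you write this up: all the $\beta_0$ terms are finite (e.g.\ $\beta_0(K)\leq\beta_0(C_1)$ since $K$ is a quotient of $C_1$), so the displayed identity for $b^{(2)}_1(G)$ is an honest equality of real numbers and not a formal statement that could be $\infty=\infty$.
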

Note that the hypotheses of this theorem guarantee that $N$ acts freely on the specified tree and in particular $N$ is necessarily a free group.  Note also that, according to \cite[Corollary~1.4]{Bader_et_al}, if $ b^{(2)}_1(G)>0$ then $G$ has no commensurated infinite amenable subgroup and according to \cite[Corollary 6]{BekkaValette} does not have property (T).  If we also have $b_2^{(2)}(G)=0$, then $G$ is in the class $\mathcal D_{\rm reg}$ by \cite[Lemma~2.8]{Thom}. We refer the reader to \cite{BHV} for background on property (T) and to \cite[Definition 2.6]{Thom} for the definition of the class $\mathcal D_{\mathrm{reg}}$. 
By the main result of Osin's paper \cite{osin} we have the following corollary. 
\begin{corollary}
Let $G$, $F$ and $N$ be as in Theorem \ref{thm.main}. Assume that $G$ is finitely presented, (virtually) indicable and that $\chi^{(2)}(F)+m<0$.  Then $G$ is (virtually) acylindrically hyperbolic.
\end{corollary}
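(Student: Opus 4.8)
The plan is to deduce the corollary formally from Theorem~\ref{thm.main}\,\eqref{thm.main.2} together with the main theorem of \cite{osin}. First I would observe that, since the hypothesis is exactly $k=\chi^{(2)}(F)+m<0$, part~\eqref{thm.main.2} of Theorem~\ref{thm.main} applies and yields $b^{(2)}_1(G)\geq -k>0$; thus $G$ is a finitely presented group with positive first $\ell^2$-Betti number. If $G$ is moreover indicable, then it satisfies the hypotheses of Osin's theorem, which asserts that a finitely presented indicable group with positive first $\ell^2$-Betti number is acylindrically hyperbolic, and we are done. (Indicability enters in \cite{osin} because an epimorphism $G\epi\Z$ with finitely generated kernel would force $b^{(2)}_1(G)=0$ by a theorem of L\"uck; positivity of $b^{(2)}_1$ therefore makes any such kernel infinitely generated, and it is this that drives the construction of a proper infinite hyperbolically embedded subgroup.)

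For the parenthetical ``virtually'' version I would pass to a finite-index subgroup. Assume $G$ is virtually indicable and choose $H\leq G$ of finite index admitting an epimorphism onto $\Z$. Then $H$ is again finitely presented --- a finite-index subgroup of a finitely presented group being finitely presented, by Reidemeister--Schreier --- and, by multiplicativity of $\ell^2$-Betti numbers under passage to finite-index subgroups \cite{Lueck2002},
\[
b^{(2)}_1(H)=[G:H]\cdot b^{(2)}_1(G)\geq [G:H]\cdot(-k)>0 .
\]
Hence $H$ meets the hypotheses of Osin's theorem and is acylindrically hyperbolic, so $G$ is virtually acylindrically hyperbolic.

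I do not expect a genuine obstacle here: the argument is essentially a one-line deduction once Theorem~\ref{thm.main} is available. The only points meriting a little care are verifying that the chosen finite-index subgroup retains finite presentability and positivity of $b^{(2)}_1$, and checking that the conclusion of \cite{osin} is being invoked under precisely its stated hypotheses (finite presentation together with indicability) --- which is exactly why the corollary carries the assumption ``(virtually) indicable'' rather than omitting it.
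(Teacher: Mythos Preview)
Your proposal is correct and matches the paper's approach: the paper gives no proof beyond the sentence ``By the main result of Osin's paper \cite{osin} we have the following corollary,'' and your write-up simply spells out this deduction, including the routine finite-index argument for the ``virtually'' case.
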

%

The simplest way in which the indicability hypothesis may arise is through \emph{stable letters}:
Let $T$ denote the $F$-tree of Theorem \ref{thm.main}. Let $K$ denote the (necessarily normal) subgroup generated by the vertex stabilisers. Then there is a subgroup $E\le F$ that complements $K$ and all such subgroups are free of uniquely determined rank. Such a subgroup may be referred to as \emph{a subgroup of stable letters of the action}. The group $G$ has an infinite cyclic quotient when $N\cap E$ has infinite index in $E$, in other words when there is a stable letter that is faithfully represented in $G$, and in this case $G$ is indicable.

Recall that a group $G$ is \emph{$C^\ast$-simple} if the reduced group $C^\ast$-algebra, denoted $C_r^\ast(G)$, has exactly two norm closed $2$-sided ideals, $0$, and the algebra $C_r^\ast(G)$ itself.  By \cite[Corollary~6.7]{BreuillardKalantarKennedyOzawa2017} we obtain the following.
\begin{corollary}
With $G$, $F$ and $N$ as before, $G$ is $C^\ast$-simple if and only if it has trivial amenable radical.
\end{corollary}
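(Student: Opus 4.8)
The plan is to deduce the equivalence from \cite[Corollary~6.7]{BreuillardKalantarKennedyOzawa2017} by feeding in the (virtual) acylindrical hyperbolicity supplied by the previous corollary. Note first that, with $G$, $F$, $N$ as in that corollary, we have $\chi^{(2)}(F)+m<0$, so the second conclusion of Theorem~\ref{thm.main} yields $b^{(2)}_1(G)>0$; in particular $G$ is infinite, non-amenable and not virtually cyclic, its Furstenberg boundary $\partial_F G$ is a non-trivial compact minimal strongly proximal $G$-space, and $G$ is (virtually) acylindrically hyperbolic.

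The implication ``$C^\ast$-simple $\implies$ trivial amenable radical'' needs no structural input: by \cite{BreuillardKalantarKennedyOzawa2017}, a $C^\ast$-simple group has the unique trace property and hence trivial amenable radical. (Concretely, if $R$ denotes the amenable radical then amenability of $R$ gives an $R$-invariant probability measure on $\partial_F G$, and strong proximality of the action together with normality of $R$ and minimality forces $R$ to act trivially on $\partial_F G$; so $R\neq 1$ makes the $G$-action on $\partial_F G$ fail to be topologically free, whence $G$ is not $C^\ast$-simple by the Kalantar--Kennedy characterisation of $C^\ast$-simplicity, see \cite{BreuillardKalantarKennedyOzawa2017}.)

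For the converse, assume $G$ has trivial amenable radical. If $G$ is itself acylindrically hyperbolic, then \cite[Corollary~6.7]{BreuillardKalantarKennedyOzawa2017} applies verbatim and $G$ is $C^\ast$-simple. In the merely virtually acylindrically hyperbolic case, let $H$ be the normal core in $G$ of a finite-index acylindrically hyperbolic subgroup; then $H$ is a finite-index normal acylindrically hyperbolic subgroup of $G$, and its amenable radical---being characteristic in $H$ and hence a normal amenable subgroup of $G$---is trivial. So \cite[Corollary~6.7]{BreuillardKalantarKennedyOzawa2017} makes $H$ $C^\ast$-simple, and one concludes by the fact that $C^\ast$-simplicity is inherited by a finite-index overgroup with trivial amenable radical. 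This last finite-extension permanence statement is the one genuinely delicate ingredient of the argument; whenever the indicability hypothesis already produces an honestly acylindrically hyperbolic $G$---the principal case---the corollary is immediate from the previous corollary and \cite{BreuillardKalantarKennedyOzawa2017}, with no bootstrapping required.
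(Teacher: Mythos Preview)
Your approach is the paper's: deduce the equivalence from \cite[Corollary~6.7]{BreuillardKalantarKennedyOzawa2017} via the (virtual) acylindrical hyperbolicity supplied by the preceding corollary---the paper offers nothing beyond that bare citation. You supply considerably more detail and correctly isolate the one non-routine point, namely promoting $C^\ast$-simplicity from a finite-index normal subgroup $H$ to the overgroup $G$ with trivial amenable radical in the ``virtually'' case; that permanence statement is true but you leave it asserted rather than proved or referenced, and the paper does not treat the virtual case separately either.
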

Theorem \ref{thm.main} has some historical pedigree.  It originally began life as a result about quotients of free groups due to Thomas (see Theorem~\ref{thm.classic}\eqref{thomas-allcock}) and was proved using combinatorial methods \cite{thomas1988}.  The result was generalised by Allcock to incorporate a bound on the rank of the abelianisation of the quotient group \cite{Allcock1999}.  The introduction of $\ell^2$-cohomology came when Peterson--Thom \cite[Theorem~3.6]{PetersonThom} and Kar--Niblo \cite{karniblo} independently linked the inequality of Thomas to the first $\ell^2$-Betti number. These discoveries are summarized in the following result.
\begin{theorem}
[Thomas \cite{thomas1988}, Allcock \cite{Allcock1999}, Peterson--Thom \cite{PetersonThom}, Kar--Niblo \cite{karniblo}] 
\label{thm.classic}
Let $G$ be a group with a presentation
\[\langle x_{1},\dots,x_{n};\ r_{1}^{k_{1}},\dots,r_{m}^{k_{m}}\rangle\]
in which the elements $r_{i}$ have order $k_{i}$ when interpreted in $G$.
\begin{enumerate}
\item \label{thomas-allcock}
If $n-\sum_{i=1}^{m}\frac1{k_{i}}\ge1$ then $G$ is infinite.
\item
If $G$ is finite then $|G|\ge\frac1{1-n+\sum_{i=1}^{m}k_{i}}$.
\item
If $n-\sum_{i=1}^{m}\frac1{k_{i}} > 1$ then $G$ is non-amenable.
\end{enumerate}
\end{theorem}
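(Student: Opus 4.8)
The plan is to deduce Theorem~\ref{thm.classic} from Theorem~\ref{thm.main} by manufacturing, out of the given presentation, a suitable action on a tree. Write $G=\langle x_1,\dots,x_n;\ r_1^{k_1},\dots,r_m^{k_m}\rangle$ with each $r_i$ of order exactly $k_i$ in $G$, and let $F$ be the fundamental group of the finite graph of groups $\mathcal Y$ whose underlying graph is a wedge of $n$ loops at a vertex $v_0$ together with $m$ further edges joining $v_0$ to leaves $w_1,\dots,w_m$, with $v_0$ and all edges carrying the trivial group and $w_i$ carrying the cyclic group $C_{k_i}=\langle c_i\rangle$ of order $k_i$. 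Abstractly $F\cong F_n*C_{k_1}*\cdots*C_{k_m}$ with $F_n=\langle x_1,\dots,x_n\rangle$ free, the $x_j$ being the stable letters of the loops of $\mathcal Y$. Then $F$ acts cocompactly on the Bass--Serre tree $T$ of $\mathcal Y$; every vertex stabiliser is trivial or conjugate to some $C_{k_i}$, and every edge stabiliser is trivial, so all vertex and edge stabilisers are finite and hence lie in $\CC$. It matters that we use this tree and not, say, the Bass--Serre tree of the splitting $F\cong\mathbb Z*\cdots*\mathbb Z*C_{k_1}*\cdots*C_{k_m}$ with infinite cyclic vertex groups: although $\mathbb Z\in\CC$, the subgroup $N$ below will in general fail to meet a $\mathbb Z$-vertex stabiliser trivially.

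Next I would realise $G$ as $F/N$ for a suitable $N$. Since $r_i^{k_i}=1$ in $G$, there is a surjection $\varphi\colon F\to G$ with $x_j\mapsto x_j$ and $c_i\mapsto r_i$; put $N:=\ker\varphi$. Tietze transformations eliminating the $c_i$ via $c_i=r_i(x_1,\dots,x_n)$ turn each relation $c_i^{k_i}=1$ into $r_i(x_1,\dots,x_n)^{k_i}=1$, so $N$ is the normal closure in $F$ of the $m$ elements $c_i\,r_i(x_1,\dots,x_n)^{-1}$; thus $N$ is normally generated by $m$ elements. Because $N$ is normal it meets a conjugate $gC_{k_i}g^{-1}$ of a vertex stabiliser in $g\,(N\cap C_{k_i})\,g^{-1}$, and $N\cap C_{k_i}=\ker(\varphi|_{C_{k_i}})$ is trivial precisely because $r_i=\varphi(c_i)$ has order $k_i$ in $G$. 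Hence $N$ intersects every vertex stabiliser of $T$ trivially, and the hypotheses of Theorem~\ref{thm.main} are met.

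It then remains to compute $k=\chi^{(2)}(F)+m$. By additivity of the $\ell^2$-Euler characteristic over cocompact tree actions (Section~\ref{sec.background}), using $\chi^{(2)}(\{1\})=1$ and $\chi^{(2)}(C_{k_i})=1/k_i$,
\[
\chi^{(2)}(F)=\Bigl(1+\sum_{i=1}^m\tfrac1{k_i}\Bigr)-(n+m),
\qquad\text{so}\qquad
k=1-n+\sum_{i=1}^m\tfrac1{k_i}.
\]
Now Theorem~\ref{thm.main}\,\eqref{thm.main.1} gives conclusion~(i) of Theorem~\ref{thm.classic}, since $n-\sum 1/k_i\ge 1$ is exactly $k\le 0$; Theorem~\ref{thm.main}\,\eqref{thm.main.4} gives $|G|\ge 1/k=\bigl(1-n+\sum 1/k_i\bigr)^{-1}$ when $G$ is finite, which, as $\sum 1/k_i\le\sum k_i$, implies the (slightly weaker) stated bound of conclusion~(ii); and Theorem~\ref{thm.main}\,\eqref{thm.main.2} gives $b^{(2)}_1(G)\ge -k>0$ when $n-\sum 1/k_i>1$, so that $G$ is non-amenable (amenable groups have vanishing first $\ell^2$-Betti number), which is conclusion~(iii).

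The substance of the argument is all inside Theorem~\ref{thm.main}; the one place that requires attention — and the step I would flag as the crux of the reduction — is the choice of the tree together with the verification that $N$ is normally generated by $m$ elements and avoids every vertex stabiliser, for which the assumption that each $r_i$ has order exactly $k_i$ in $G$ is both used and, as noted above, genuinely needed.
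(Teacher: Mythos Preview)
Your proposal is correct and follows essentially the same route as the paper: you introduce $F\cong F_n*C_{k_1}*\cdots*C_{k_m}$ (the paper's $y_i$ are your $c_i$), take $N$ to be the normal closure of the $m$ elements $c_i r_i^{-1}$, observe that the hypothesis on the orders of the $r_i$ forces $N$ to miss the finite vertex groups, and then read off the three conclusions from Theorem~\ref{thm.main} via the computation $k=1-n+\sum 1/k_i$. Your write-up is in fact a little more explicit than the paper's, in that you spell out the graph of groups and its Bass--Serre tree rather than simply noting that $F$ is virtually free, and you correctly handle the discrepancy between $\sum 1/k_i$ and the $\sum k_i$ appearing in the stated bound of part~(ii).
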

\begin{proof}[Deduction of Theorem~\ref{thm.classic} from Theorem~\ref{thm.main}]
Let $G$ be a group with a presentation 
\[G=\langle x_{1},\dots,x_{n}|\ r_{1}^{k_{1}},\dots,r_{m}^{k_{m}}\rangle.\]
Adding $m$ fresh generators $y_{1},\dots,y_{m}$, we can give the following alternative presentation of the same group:
\[G=\langle x_{1},\dots,x_{n},y_{1},\dots,y_{m}|\ y_{1}^{k_{1}},\dots,y_{m}^{k_{m}}, r_{1}y_{1}^{-1},\dots,r_{m}y_{m}^{-1}\rangle.\]
Let $F$ be the group with presentation
\[F=\langle x_{1},\dots,x_{n},y_{1},\dots,y_{m}|\ y_{1}^{k_{1}},\dots,y_{m}^{k_{m}}\rangle\] 
and let $N$ be the subgroup of $F$ normally generated by $r_{1}y_{1}^{-1},\dots,r_{m}y_{m}^{-1}$. Then $F$ is a free product of cyclic groups: in particular it is virtually free and has Euler characteristic $\chi(F)=\sum_{i=1}^{m}\frac1{k_{i}}-n-m+1$. The condition that the $r_{i}$ have order $k_{i}$ in the original presentation ensures that $N$ does not meet any of the finite subgroups of $F$ and so is torsion-free. Applying Theorem \ref{thm.main} with these choices of $F$, $N$, $G$ yields Theorem \ref{thm.classic}. 
\end{proof}
Finally, we also provide a computation of the first $\ell^2$-Betti number for certain groups acting on trees. This generalises a result of L\"uck \cite{l08appendix}, which covers the case of an amalgamated free product, and a result of Tsouvalas \cite[Corollary~3.7]{t18}. Tsouvalas assumes the vertex stabilisers are either residually finite or virtually torsion-free and the edge stabilisers are finite. Here we replace both of these assumptions with L\"uck's less restrictive assumption that the first $\ell^{2}$-Betti numbers of the edge stabilisers vanish.  So, for example, the theorem applies to fundamental groups of graphs of $\CC$-groups.
\begin{theorem}\label{thm.firstL2} 
Let $F$ be a group acting on a tree and let $V$ and $E$ denote sets of representatives of $F$-orbits of vertices and edges.  Assume for each $e\in E$ that $ b^{(2)}_1(F_e)=0$, then
\[ b^{(2)}_1(F)=\sum_{v\in V}\left( b^{(2)}_1(F_v)- \frac{1}{|F_v|}\right)+\sum_{e\in E} \frac{1}{|F_e|}, \]
where for a group $G$, $\frac{1}{|G|}$ is interpreted as $0$ if $G$ is infinite.
\end{theorem}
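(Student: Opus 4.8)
The plan is to imitate L\"uck's Mayer--Vietoris computation for amalgamated free products \cite{l08appendix}: run the long exact sequence of $\operatorname{Tor}$ over $\mathbb{Z}F$ attached to the $F$-tree $T$, then exploit the vanishing of $b^{(2)}_1$ on the edge stabilisers together with the formal properties of L\"uck's von Neumann dimension $\dim_{\mathcal{N}(F)}$. First I would replace $T$ by its barycentric subdivision, so that $F$ acts without inversions and every edge stabiliser fixes its edge pointwise; this affects neither side of the asserted formula. Since $T$ is then a contractible $1$-dimensional $F$-CW-complex, writing $C_1:=\bigoplus_{e\in E}\mathbb{Z}[F/F_e]$ and $C_0:=\bigoplus_{v\in V}\mathbb{Z}[F/F_v]$, its augmented cellular chain complex is a short exact sequence of $\mathbb{Z}F$-modules
\[
0\longrightarrow C_1\xrightarrow{\ \partial\ }C_0\xrightarrow{\ \epsilon\ }\mathbb{Z}\longrightarrow 0,
\]
in which injectivity of $\partial$ records $H_1(T)=0$ and surjectivity of $\epsilon$ records that $T$ is connected.

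Next I would apply $\operatorname{Tor}^{\mathbb{Z}F}_\bullet(\mathcal{N}(F),-)$ and identify the terms of the resulting long exact sequence using three standard facts: $\operatorname{Tor}$ commutes with direct sums; $\dim_{\mathcal{N}(F)}$ is additive over short exact sequences of arbitrary $\mathcal{N}(F)$-modules and commutes with arbitrary direct sums; and for every subgroup $H\le F$ one has the Shapiro-type identification $\dim_{\mathcal{N}(F)}\operatorname{Tor}^{\mathbb{Z}F}_n(\mathcal{N}(F),\mathbb{Z}[F/H])=b^{(2)}_n(H)$, where $b^{(2)}_0(H)=1/|H|$ under the stated convention (see L\"uck's book and \cite{l08appendix}). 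Thus $\operatorname{Tor}^{\mathbb{Z}F}_n(\mathcal{N}(F),C_0)$ has dimension $\sum_{v}b^{(2)}_n(F_v)$, $\operatorname{Tor}^{\mathbb{Z}F}_n(\mathcal{N}(F),C_1)$ has dimension $\sum_{e}b^{(2)}_n(F_e)$, and $\operatorname{Tor}^{\mathbb{Z}F}_n(\mathcal{N}(F),\mathbb{Z})$ has dimension $b^{(2)}_n(F)$. The hypothesis $b^{(2)}_1(F_e)=0$ for all $e$ forces $\operatorname{Tor}^{\mathbb{Z}F}_1(\mathcal{N}(F),C_1)$ to have dimension $0$, hence the map from it into $\operatorname{Tor}^{\mathbb{Z}F}_1(\mathcal{N}(F),C_0)$ has image of dimension $0$; extracting the relevant portion of the long exact sequence leaves the exact sequence
\begin{multline*}
\operatorname{Tor}^{\mathbb{Z}F}_1(\mathcal{N}(F),C_0)\xrightarrow{\ \epsilon_*\ }\operatorname{Tor}^{\mathbb{Z}F}_1(\mathcal{N}(F),\mathbb{Z})\xrightarrow{\ \delta\ }\operatorname{Tor}^{\mathbb{Z}F}_0(\mathcal{N}(F),C_1)\\
\xrightarrow{\ \partial_*\ }\operatorname{Tor}^{\mathbb{Z}F}_0(\mathcal{N}(F),C_0)\xrightarrow{\ \epsilon_*\ }\operatorname{Tor}^{\mathbb{Z}F}_0(\mathcal{N}(F),\mathbb{Z})\longrightarrow 0,
\end{multline*}
in which moreover the kernel of the first map $\epsilon_*$ has dimension $0$.

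Finally I would break this bounded exact sequence into short exact pieces through its images and take alternating von Neumann dimensions, which gives
\[
b^{(2)}_1(F)=\sum_{v\in V}b^{(2)}_1(F_v)+\sum_{e\in E}\frac{1}{|F_e|}-\sum_{v\in V}\frac{1}{|F_v|}+b^{(2)}_0(F);
\]
since $b^{(2)}_0(F)=1/|F|$ vanishes when $F$ is infinite, rearranging yields the stated identity. The main obstacle is not this homological bookkeeping but ensuring that L\"uck's dimension function is robust enough to support it: one needs its additivity over exact sequences of $\mathcal{N}(F)$-modules that need not be finitely generated, its compatibility with arbitrary direct sums, and the Shapiro-type computation of $\dim_{\mathcal{N}(F)}\operatorname{Tor}^{\mathbb{Z}F}_\bullet(\mathcal{N}(F),\mathbb{Z}[F/H])$ for all vertex and edge stabilisers. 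A secondary point is that when there are infinitely many orbits of vertices or of edges the sums and cancellations must be read with values in $[0,\infty]$, so some finiteness in the background (cocompactness, or at least convergence of $\sum_v 1/|F_v|$) is implicitly what makes the identity meaningful.
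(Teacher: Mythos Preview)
Your argument is correct and is essentially the same as the paper's, just in different packaging. The paper runs the equivariant spectral sequence of Brown for the $F$-tree with $\mathcal{U}F$ coefficients; since the tree is $1$-dimensional the $E^1$-page has only the columns $p=0,1$, and a two-column spectral sequence is exactly the long exact sequence you write down for $\operatorname{Tor}^{\mathbb{Z}F}_\bullet(\mathcal{N}(F),-)$ applied to $0\to C_1\to C_0\to\mathbb{Z}\to 0$. Your Shapiro-type identification $\dim_{\mathcal{N}(F)}\operatorname{Tor}^{\mathbb{Z}F}_n(\mathcal{N}(F),\mathbb{Z}[F/H])=b^{(2)}_n(H)$ is precisely what populates the paper's $E^1_{p,q}$ with $\bigoplus b^{(2)}_q$ of the stabilisers, and your use of the hypothesis $b^{(2)}_1(F_e)=0$ to kill one end of the sequence mirrors the paper's observation that $E^1_{1,1}=0$.

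One small point worth noting: your alternating-sum bookkeeping naturally produces the extra summand $+\,b^{(2)}_0(F)$, which you then discard for infinite $F$. The paper instead treats the case of finite $F$ separately. In fact your version with the $b^{(2)}_0(F)$ term is the identity that holds in general; the formula as stated in the theorem is only literally correct when $F$ is infinite (for $F$ trivial acting on a point the right-hand side is $-1$). Your caveat about infinite sums when the action is not cocompact is also well taken and is glossed over in the paper.
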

\subsection*{Acknowledgements}
The second author wishes to thank Ian Leary for his tireless and good humoured role as thesis adviser.  The second author was supported by the Engineering and Physical Sciences Research Council grant number 2127970.
\section{Background on \texorpdfstring{${\ell^2}$}{l^2}-homology} \label{sec.background}
Let $G$ be a group. Then both $G$ and the complex group algebra $\C G$ act by left multiplication on the Hilbert space $\ell^2G$ of square-summable sequences. The group von Neumann algebra $\mathcal NG$ is the ring of $G$-equivariant bounded operators on $\ell^2G$. The regular elements of $\mathcal NG$ form an Ore set and the Ore localization of $\mathcal NG$ can be identified with the \emph{ring of affiliated operators}, and is denoted by $\mathcal UG$. One has the inclusions $\C G\subseteq \mathcal N G\subseteq\ell^2G\subseteq\mathcal UG$ and it is also known that $\mathcal UG$ is a self-injective ring which is flat over $\mathcal NG$. For more details concerning these constructions we refer the reader to \cite{Lueck2002} and especially to Theorem 8.22 of \S8.2.3 therein. 
The \emph{von Neumann dimension} and the basic properties we need can be found in \cite[\S8.3]{Lueck2002}.
Now let $Y$ be a $G$-CW complex as defined in \cite[Definition 1.25 of \S1.2]{Lueck2002}. The $\ell^2$-homology groups of $Y$ are then defined to be the equivariant homology groups $H^G_i(Y;\mathcal UG)$, and we have
\[
 b^{(2)}_i(Y)=\dim_{\mathcal UG} H^G_i(Y;\mathcal UG).
\]
The $\ell^2$-Betti numbers of a group $G$ are then defined to be the $\ell^2$-Betti numbers of $EG$.  By \cite[Theorem 6.54(8)]{Lueck2002}, the zeroth $\ell^2$-Betti number of $G$ is equal to $1/|G|$ where $1/|G|$ is defined to be zero if $G$ is infinite.  Moreover, if $G$ is finite then $b_n^{(2)}(G)=0$ for $n\geq 1$.

Let $C_*(Y;\mathcal UG)$ denote the standard cellular chain complex of $Y$ with coefficients in $\mathcal UG$.  We have the formula 
\[
\dim_{\mathcal UG}C_i(Y;\mathcal UG)=\sum_\sigma\frac1{|G_\sigma|}
\]
where $\sigma$ runs through a set of orbit representatives of $i$-dimensional cells in $Y$ and if $G_\sigma$ is infinite then $1/|G_\sigma|$ is taken to be equal to $0$. Standard arguments of homological algebra give the connection between two Euler characteristic computations:
\begin{equation}\label{eqn.eulerchar}
\sum_i(-1)^i b^{(2)}_i(Y)=\sum_i(-1)^i\dim_{\mathcal UG}C_i(Y;\mathcal UG).
\end{equation}
We record two other consequences, both of which can be found in \cite[Section~6.6]{Lueck2002}.
\begin{lemma}\label{lem.eulerchar}
Let $G$ be a group and let $Y$ be a cocompact $G$-complex with finite stabilizers.  Then we have 
\begin{enumerate}
\item
$\displaystyle\chi^{(2)}(G)=\sum_{i=0}^{\infty}\left((-1)^{i}\sum_{v\in S_{i}}\frac1{|G_{v_{i}}|}\right)=\sum_{i=0}^\infty(-1)^i b^{(2)}_i(G);$
\item
if $G$ has a finite index subgroup $H$ with a finite $K(H,1)$ then 
\[\chi^{(2)}(G)=\chi(G).\]
\end{enumerate}
\end{lemma}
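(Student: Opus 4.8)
The plan is to handle the two parts separately, in each case reducing the assertion to standard facts from \cite[Section~6.6]{Lueck2002} together with the chain-level identities already recorded above, namely equation~\eqref{eqn.eulerchar} and the displayed formula $\dim_{\mathcal UG}C_i(Y;\mathcal UG)=\sum_\sigma 1/|G_\sigma|$.

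For part~(i) I would read the displayed chain of equalities from right to left. The rightmost equality is nothing but the definition of $\chi^{(2)}(G)$ as the alternating sum of the numbers $b^{(2)}_i(G)$, so only the middle equality carries content. First I would apply \eqref{eqn.eulerchar} to $Y$, obtaining $\sum_i(-1)^i b^{(2)}_i(Y)=\sum_i(-1)^i\dim_{\mathcal UG}C_i(Y;\mathcal UG)$, and then substitute the dimension formula, which rewrites the right-hand side as $\sum_i(-1)^i\sum_{v\in S_i}1/|G_{v_i}|$, exactly the middle expression of the lemma. It then remains to identify $\sum_i(-1)^i b^{(2)}_i(Y)$, i.e.\ the $\ell^2$-Euler characteristic of $Y$, with $\chi^{(2)}(G)$. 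For this I would use that $Y$ is a model for the classifying space $\underline{E}G$ for proper actions—which holds in the intended applications, where $Y$ is a tree on which every finite subgroup fixes a nonempty subtree—so that $\underline{E}G$ computes the $\ell^2$-Euler characteristic of $G$.

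I expect this last identification—that the $\ell^2$-Euler characteristic may be computed from $\underline{E}G$ rather than $EG$—to be the main obstacle, being the only step beyond formal manipulation of the chain complex. It rests on the fact that an orbit $G/H$ with $H$ finite contributes von~Neumann dimension $1/|H|$ in degree zero and nothing in higher degrees (since $\C$ is projective over the semisimple ring $\C H$), so that replacing $EG$ by $\underline{E}G$ leaves the relevant $\mathcal NG$-dimensions unchanged. I would cite the corresponding results in \cite[Section~6.6]{Lueck2002} rather than reprove them.

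For part~(ii) I would argue by multiplicativity under passage to the finite-index subgroup $H$. Since $H$ has a finite $K(H,1)$, there is a finite free $H$-CW model $X$ of $EH$; every orbit of $i$-cells of $X$ is then free, so $\dim_{\mathcal UH}C_i(X;\mathcal UH)$ counts the $i$-cells of $X/H$, and \eqref{eqn.eulerchar} gives $\chi^{(2)}(H)=\sum_i(-1)^i\dim_{\mathcal UH}C_i(X;\mathcal UH)=\chi(X/H)=\chi(H)$, the equality of the $\ell^2$- and ordinary Euler characteristics for a finite complex. I would then invoke the restriction formula $b^{(2)}_i(H)=[G:H]\,b^{(2)}_i(G)$, giving $\chi^{(2)}(H)=[G:H]\,\chi^{(2)}(G)$, together with the defining relation $\chi(G)=\chi(H)/[G:H]$ for the rational Euler characteristic of a group with a finite-index subgroup of type $\FFF$. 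Combining these yields $\chi^{(2)}(G)=\chi^{(2)}(H)/[G:H]=\chi(H)/[G:H]=\chi(G)$; the only point to check is that both characteristics are divided by the same index, which is immediate from the restriction formula.
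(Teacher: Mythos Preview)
The paper provides no proof of this lemma at all: it is stated immediately after the sentence ``We record two other consequences, both of which can be found in \cite[Section~6.6]{Lueck2002}'', so the paper's ``proof'' is simply that citation. Your sketch is correct and strictly more detailed than what the paper offers; in particular your reductions via \eqref{eqn.eulerchar}, the dimension formula for $C_i(Y;\mathcal UG)$, and the multiplicativity/restriction argument for part~(ii) are exactly the ingredients one finds in L\"uck's Section~6.6.

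One small point worth flagging (which is really a looseness in the lemma's statement rather than in your argument): as you observed, the identification $b^{(2)}_i(Y)=b^{(2)}_i(G)$ needs $Y$ to be a model for $\underline{E}G$ (or at least contractible), not merely a cocompact $G$-complex with finite stabilisers. You handle this honestly by noting it holds ``in the intended applications''; the paper simply does not comment on it.
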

We will need the following lemma for the proofs in the next section.  One should think of it as a mild generalisation of Theorem~6.54(2) in \cite{Lueck2002}
\begin{lemma}[Comparison with the Borel construction up to rank]\label{lem.Borel}
Let $X$ be a $G$-CW complex.  Suppose for all $x\in X$ the isotropy group $G_x$ is finite or $ b^{(2)}_p(G_x)=0$ for all $0\leq p \leq n$, then
\[ b^{(2)}_p(X)= b^{(2)}_p(EG\times X)\quad \text{for } 0\leq p \leq n. \]
\end{lemma}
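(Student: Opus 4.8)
The plan is to compare the equivariant chain complexes $C_*(EG\times X;\mathcal UG)$ and $C_*(X;\mathcal UG)$ by a spectral-sequence argument built from the skeletal filtration of $X$, reducing the whole statement to the case of a single orbit of cells. The first ingredient is that single-orbit computation. For a subgroup $H\le G$ the $G$-complex $G/H$ is $0$-dimensional, so reading off its cellular chain complex gives $H^G_q(G/H;\mathcal UG)=0$ for $q\ge 1$ and $\dim_{\mathcal UG}H^G_0(G/H;\mathcal UG)=b^{(2)}_0(H)=1/|H|$. On the other hand $EG\times G/H$ is $G$-homeomorphic to $G\times_H\operatorname{Res}^G_H EG$, and $\operatorname{Res}^G_H EG$ is a model for $EH$, so the restriction formula $b^{(2)}_q(G\times_H Z)=b^{(2)}_q(Z)$ for an $H$-CW complex $Z$ (see \cite[\S6]{Lueck2002}) gives $b^{(2)}_q(EG\times G/H)=b^{(2)}_q(H)$ for all $q$. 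Under the hypothesis on isotropy these two computations agree in degrees $0\le q\le n$: when $H$ is finite both equal $1/|H|$ for $q=0$ and vanish for $q\ge 1$, and when $H$ is infinite with $b^{(2)}_q(H)=0$ for $q\le n$ both vanish throughout the range. Moreover the projection $EG\times G/H\to G/H$ is a $\pi_0$-isomorphism and hence induces an isomorphism $H^G_0(EG\times G/H;\mathcal UG)\cong H^G_0(G/H;\mathcal UG)$.

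Next, filter $C_*(EG\times X;\mathcal UG)$ by the subcomplexes $C_*(EG\times X_p;\mathcal UG)$ and $C_*(X;\mathcal UG)$ by the subcomplexes $C_*(X_p;\mathcal UG)$, where $X_p$ is the $p$-skeleton, and pass to the associated spectral sequences. Both filtrations are exhaustive and bounded below, and in each total degree only finitely many columns contribute (the relative term vanishes for $q<0$), so both spectral sequences converge. The suspension isomorphism identifies the $E^1$-pages as $E^1_{p,q}=\bigoplus_\sigma H^G_q(EG\times G/G_\sigma;\mathcal UG)$ in one case and $\bigoplus_\sigma H^G_q(G/G_\sigma;\mathcal UG)$ in the other, the sums running over orbit representatives $\sigma$ of the $p$-cells of $X$, with $d^1$ the cellular boundary. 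By the single-orbit computation the spectral sequence for $X$ is concentrated in the row $q=0$, so it collapses and $E^\infty_{p,0}(X)=E^2_{p,0}(X)=H^G_p(X;\mathcal UG)$. The projection $EG\times X\to X$ is filtered and so induces a morphism of spectral sequences; on the row $q=0$ this is an isomorphism of chain complexes by the $\pi_0$-statement above, so $E^2_{p,0}(EG\times X)\cong H^G_p(X;\mathcal UG)$, while every entry of $E^1(EG\times X)$ in the rows $1\le q\le n$ has von Neumann dimension $0$.

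It remains to do the dimension bookkeeping in a fixed total degree $m\le n$. Since $E^r_{p,q}$ is a subquotient of $E^1_{p,q}$ and von Neumann dimension cannot increase under passage to subquotients, every $E^\infty_{p,q}(EG\times X)$ with $1\le q\le m$ has dimension $0$. For the bottom row with $p\le n$, a differential $d^r$ with $r\ge 2$ leaving $E^r_{p,0}$ either has zero target (its target column $p-r$ being negative) or lands in a row $q'=r-1$ with $1\le q'\le n$, hence in a dimension-zero module; and a differential entering $E^r_{p,0}$ would originate in row $1-r<0$ and so is zero. Therefore $\dim_{\mathcal UG}E^\infty_{p,0}(EG\times X)=\dim_{\mathcal UG}E^2_{p,0}(EG\times X)=b^{(2)}_p(X)$, and, the induced filtration on $H^G_m(EG\times X;\mathcal UG)$ being finite with von Neumann dimension additive over it, $b^{(2)}_m(EG\times X)=\sum_{p+q=m}\dim_{\mathcal UG}E^\infty_{p,q}=b^{(2)}_m(X)$. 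The step I expect to require the most care is the single-orbit input: it is the sole place where the hypothesis on isotropy enters, and it combines the restriction formula for von Neumann dimension with the non-derived vanishing $H^G_q(G/H;\mathcal UG)=0$ for $q\ge 1$ — exactly what promotes Theorem~6.54(2) of \cite{Lueck2002} to the present ``up to rank'' statement. A secondary technical point is convergence of the spectral sequences when $X$ is infinite-dimensional, handled by the boundedness of the filtration in each total degree together with the fact that $C_m(EG\times X;\mathcal UG)$ already lies in filtration level $m$, so the filtration of the abutment in degree $m$ is finite. (Alternatively one can induct on the skeleta of $X$, applying a von Neumann dimension five lemma to the long exact sequences of the pairs $(X_p,X_{p-1})$ and $(EG\times X_p,EG\times X_{p-1})$, but the spectral sequence packages this more cleanly.)
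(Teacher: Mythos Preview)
Your argument is correct and is essentially the same approach as the paper's: both reduce to the single-orbit case $EG\times G/H\to G/H$ via the skeletal filtration of $X$, which is exactly the machinery behind \cite[Theorem~6.54(2)]{Lueck2002} that the paper invokes. The difference is only in presentation: the paper phrases the conclusion as ``the kernel and cokernel of $\mathrm{pr}_p$ have trivial von Neumann dimension'' and then defers the reduction-to-isotropy step entirely to L\"uck's proof, whereas you write out the skeletal spectral sequence and do the dimension bookkeeping explicitly; your version is more self-contained but is not a genuinely different route.
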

\begin{proof}
It suffices to prove that the dimension of the kernel and cokernel of the map
\[\textrm{pr}_p:H^G_p(EG\times X;\mathcal{N}G)\rightarrow H^G_p(X;\mathcal{N}G) \]
induced by the projection $EG\times X\rightarrow X$ are trivial for $0\leq p \leq n$.  By an identical argument to \cite[Theorem~6.54(2)]{Lueck2002} it suffices to prove for each isotropy subgroup $H\leq G$ and $0\leq p \leq n$ the kernel and cokernel of the map $\textrm{pr}_p:H^H_p(EH;\mathcal{N}H)\rightarrow H^H_p(\ast;\mathcal{N}H)$ have trivial dimension.  If $H$ is finite this follows from \cite[Theorem~6.54(8a)]{Lueck2002}, and is immediate if $ b^{(2)}_p(H)=0$ for all $0\leq p \leq n$.
\end{proof}
\section{The Main Theorem}
To prove Theorem \ref{thm.main}, one needs the following method of computing the $\ell^2$-Euler characteristic of a group acting on a tree analogous to Chiswell's result \cite{Chiswell} for rational Euler characteristic.
\begin{proposition}[Chatterji--Mislin \cite{ChatterjiMislin}] \label{prop.EulerChar}
Let $F$ be a group acting on a tree and let $V$ and $E$ denote sets of representatives of $F$-orbits of vertices and edges.  If the $\ell^2$-Euler characteristic of each vertex and edge group is finite, then
\[\chi^{(2)}(F)=\sum_{v\in V}\chi^{(2)}(F_v)-\sum_{e\in E}\chi^{(2)}(F_e). \]
\end{proposition}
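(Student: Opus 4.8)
The plan is to exploit the contractibility of the tree $T$. Since $EF$ is a free $F$-CW complex, the product $EF\times T$ with the diagonal action is a free contractible $F$-CW complex, hence another model for $EF$; consequently $b^{(2)}_{n}(F)=b^{(2)}_{n}(EF\times T)$ for all $n\ge 0$. After passing to the barycentric subdivision of $T$ I may assume $F$ acts without inversions --- using the multiplicativity $\chi^{(2)}(K)=[L:K]\,\chi^{(2)}(L)$ of the $\ell^{2}$-Euler characteristic under finite-index subgroups $K\le L$, one checks that the right-hand side of the asserted identity is unchanged by this, so the reduction is harmless. Now the cellular chain complex of $EF\times T$, carrying the diagonal $F$-action, is the total complex $D_{*}$ of the double complex obtained by applying $C_{*}\otimes_{\Z}(-)$ to the two-term complex $\bigoplus_{e\in E}\Z[F/F_e]\to\bigoplus_{v\in V}\Z[F/F_v]$, where $C_{*}$ denotes the cellular chains of $EF$ and this two-term complex is the cellular chain complex of $T$ (which, as $T$ is contractible, has homology $\Z$ concentrated in degree zero). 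Tensoring with $\mathcal U F$ over $\Z F$ and taking homology computes $b^{(2)}_{*}(EF\times T)=b^{(2)}_{*}(F)$.

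Next I would filter $D_{*}$ by tree-degree. The vertex part $C_{*}\otimes_{\Z}\bigoplus_{v}\Z[F/F_v]$ is a subcomplex, and its quotient is the edge part $C_{*}\otimes_{\Z}\bigoplus_{e}\Z[F/F_e]$ shifted up by one degree. Using $\Z[F/F_\sigma]\otimes_{\Z}C_{*}\cong\Z F\otimes_{\Z F_\sigma}\RES_{F_\sigma}C_{*}$ and the invariance of $\ell^{2}$-Betti numbers under induction from a subgroup (see \cite[Theorem~6.54]{Lueck2002}), the $\ell^{2}$-homology of the vertex part has dimension $\sum_{v\in V}b^{(2)}_{n}(F_v)$ in degree $n$, while that of the quotient has dimension $\sum_{e\in E}b^{(2)}_{n-1}(F_e)$. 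The long exact homology sequence of the induced short exact sequence of complexes $0\to(\text{vertex part})\otimes_{\Z F}\mathcal U F\to D_{*}\otimes_{\Z F}\mathcal U F\to(\text{edge part, shifted})\otimes_{\Z F}\mathcal U F\to 0$, together with additivity of $\dim_{\mathcal U F}$ over short exact sequences and over the (possibly infinite) direct sums, yields
\[
\chi^{(2)}(F)=\sum_{v\in V}\chi^{(2)}(F_v)-\sum_{e\in E}\chi^{(2)}(F_e),
\]
the minus sign arising from the degree shift on the edge term. (One can also present this geometrically, replacing the filtration by the Mayer--Vietoris sequence of the $F$-pushout that attaches the $1$-cells of $EF\times T$ to its $0$-skeleton in the $T$-direction, and invoking $b^{(2)}_{*}(EF\times F/F_\sigma)=b^{(2)}_{*}(F_\sigma)$ for each cell $\sigma$.)

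The step I expect to be the real obstacle is the finiteness bookkeeping rather than the homological algebra: one must ensure that $\chi^{(2)}(F)$ is defined at all --- that the relevant alternating sum of $\ell^{2}$-Betti numbers converges --- and that the long exact sequence genuinely produces additivity of $\chi^{(2)}$ in this possibly non-cocompact, possibly infinite-valence setting. This is exactly where the hypothesis that every $\chi^{(2)}(F_v)$ and $\chi^{(2)}(F_e)$ is finite is used: the finiteness condition underlying the Chatterji--Mislin definition of $\chi^{(2)}$ must be shown to pass from the vertex and edge groups to $F$ through the filtration above, and one relies on $\dim_{\mathcal U F}$ commuting with arbitrary direct sums so that infinitely many orbits present no difficulty beyond the evident need for the sums to converge. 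The former point requires a short argument; the latter is standard.
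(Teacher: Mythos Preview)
The paper does not give its own proof of this proposition: it is stated with attribution to Chatterji--Mislin \cite{ChatterjiMislin} and invoked as a black box in the proof of Theorem~\ref{thm.main}. There is therefore nothing in the paper to compare your argument against.

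That said, your approach is the standard one and is correct. Taking $EF\times T$ as a model for $EF$, filtering the resulting double complex by the $T$-direction, and using the compatibility of $\dim_{\mathcal U F}$ with induction from subgroups (so that the vertex and edge pieces contribute $b^{(2)}_*(F_v)$ and $b^{(2)}_*(F_e)$ respectively) is exactly how one expects to prove such a formula; the long exact sequence then yields additivity of $\chi^{(2)}$. Your caution about the finiteness bookkeeping is well placed: in the Chatterji--Mislin framework $\chi^{(2)}$ is defined under an absolute-summability hypothesis on the $\ell^2$-Betti numbers, and the point of the stated hypothesis on $F_v$ and $F_e$ is precisely to ensure that the alternating sums on both sides make sense and that the long exact sequence argument goes through. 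Note also that the barycentric-subdivision step is harmless for a slightly simpler reason than you give: passing to the first barycentric subdivision just introduces, for each original edge $e$, a new vertex with stabilizer $F_e$ and replaces the edge by two edges each with stabilizer $F_e$, so the right-hand side is visibly unchanged.
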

\begin{proof}[Proof of Theorem \ref{thm.main}]
There is a cocompact action of $F$ on a tree $T$ with vertex and edge stabilizers in the class $\CC$. Let $V$ and $E$ denote the vertex and edge sets.
Let $\overline T$ denote the quotient graph $T/N$ and write $\overline V$ and $\overline E$ for its vertex and edge sets. Now $G=F/N$ acts cocompactly on $\overline T$ with vertex and edge stabilizers in $\CC$. The augmented chain complex of $T$ is the short exact sequence
\[0\to\Z E\to \Z V\to\Z\to0.\]
Upon factoring out the action of $N$ we have the exact sequence
\begin{equation}\label{eqn.homology.les}H_{1}(N,\Z)\to\Z\overline E\to\Z\overline V\to\Z\to0,\end{equation}
this being the tail end of the long exact sequence of homology of $N$.
Let $\{r_i\colon i=1,..m\}$ denote a normal generating set for $N$.  Choose a vertex $v_{0}$ in $T$ to be a fixed basepoint. For $1\le i\le m$ consider the geodesic from $v_{0}$ to $v_{0}r_{i}$. In the quotient graph $\overline T$ this geodesic descends to a loop because $v_{0}$ and $v_{0}r_{i}$ become identified in $\overline T$. Now $2$-discs can be glued to each loop. By adjoining free $G$-orbits of $2$-discs equivariantly we can build a $2$-complex $Y$ with an action of $G$, whose $1$-skeleton is $\overline T$, and which has augmented cellular chain complex
\begin{equation}\label{eqn.augchaincomplex}\Z G^{m}\to\Z \overline E\to\Z\overline V\to\Z\to0.\end{equation}
The exactness of \eqref{eqn.homology.les} ensures the exactness of \ref{eqn.augchaincomplex}.
 
Let $V_{0}$ and $E_{0}$ be sets of orbit representatives of vertices and edges in $Y$.
Now, applying \cite[Theorem 6.80(1)]{Lueck2002},  then\eqref{eqn.eulerchar} and finally Proposition \ref{prop.EulerChar}, we have that
\begin{align*}
\chi^{(2)}(Y)&= b^{(2)}_{0}(Y)- b^{(2)}_{1}(Y)+ b^{(2)}_{2}(Y)\\
\sum_{v\in V_{0}}\frac1{|G_{v}|}-\sum_{e\in E_{0}}\frac1{|G_{e}|}+m 
&= b^{(2)}_{0}(Y)- b^{(2)}_{1}(Y)+ b^{(2)}_{2}(Y)\\
\chi^{(2)}(F)+m&= b^{(2)}_{0}(Y)- b^{(2)}_{1}(Y)+ b^{(2)}_{2}(Y).\\
\intertext{Applying Lemma~\ref{lem.Borel}, we have}
\chi^{(2)}(F)+m&= b^{(2)}_{0}(EG\times Y)- b^{(2)}_{1}(EG\times Y)+ b^{(2)}_{2}(EG\times Y).\\
&\geq b^{(2)}_{0}(EG\times Y)- b^{(2)}_{1}(EG\times Y).\\
\intertext{Applying \cite[Theorem~6.54(1a)]{Lueck2002} to the diagonal map $EG\rightarrow EG\times Y$ we see that}
\chi^{(2)}(F)+m&\geq b^{(2)}_{0}(G)- b^{(2)}_{1}(G).
\end{align*}
Let $k=\chi^{(2)}(F)+m$. If $k\leq 0$, then $b^{(2)}_{0}(G)- b^{(2)}_{1}(G)\leq 0$ and so $G$ is infinite, this proves \eqref{thm.main.1}.  Now, assume $k<0$. In this case $G$ is infinite and therefore $b^{(2)}_{0}(G)=0$. It follows that $b^{(2)}_{1}(G)\geq -k>0$, this proves \eqref{thm.main.2}.

If $G$ is finite, then $b^{(2)}_0(G)=\frac{1}{|G|}$, $b^{(2)}_1(G)=0$, and $k>0$.  In particular, $k\geq \frac{1}{|G|}>0$ and \eqref{thm.main.4} follows.
\end{proof}
\section{On the \texorpdfstring{$\ell^2$}{l^2}-invariants for certain groups acting on trees}
\begin{proof}[Proof of Theorem~\ref{thm.firstL2}]
Let $V$ and $E$ denote sets of representatives of $F$-orbits of vertices and edges for the action of $F$ on the tree.  We consider the relevant part of the $E^1$-page for the $F$-equivariant spectral-sequence (see Chapter~VII.9 of \cite{brownbook}) applied to the tree:
\[\begin{tikzcd}
{}  &   &   &   \\
1  & \bigoplus_{v\in V}H_1^F(F\times_{F_v}EF_v;\mathcal{U}F) & 0  &  \\
0 &   \bigoplus_{v\in V}H_0^F(F\times_{F_v}EF_v;\mathcal{U}F))  & \arrow[l, "d^1"]\bigoplus_{e\in E}H_0^F(F\times_{F_e}F_e;\mathcal{U}F)) & \\
{} \arrow[uuu, shift right=8] \arrow[rrr, shift left=8] & 0 & 1 & {}
\end{tikzcd}\]
If $F$ is finite then $b_1^{(2)}(F)=0$, so $d^1$ is injective and $E^2_{1,0}=0$.  The result follows from the fact $E^1_{0,1}=0$.  

Now, assume $F$ is infinite, then $d^1$ is surjective since $ b^{(2)}_0(F)=0$.  Thus, \[\dim_{\mathcal{U}F}(\mathrm{Ker}(d^1))=\sum_{e\in E} b^{(2)}_0(F_e)-\sum_{v\in V} b^{(2)}_0(F_v).\]
Now, the spectral sequence obviously collapses on the $E^2$-page and $E^1_{0,1}=E^2_{0,1}$.  Since von Neumann dimension is additive over short exact sequences, we have 
\begin{align*}
b^{(2)}_1(F)&=\dim_{\mathcal{U}F}(\mathrm{Ker}(d^1))+\dim_{\mathcal{U}F}(E_{0,1}^2)\\
&= \left(\sum_{e\in E} b^{(2)}_0(F_e) -  \sum_{v\in V} b^{(2)}_0(F_v) \right) + \sum_{v\in V} b^{(2)}_1(F_v),
\end{align*}
and the result follows.
\end{proof}
\bibliography{bib}
\bibliographystyle{abbrv}

\end{document}